\theoremstyle{plain}
\newtheorem{thm}{Theorem}[section]
\newtheorem{prop}[thm]{Proposition}
\newcommand\Z{\mathbb{Z}}
\renewcommand{\P}{{\mathbb P}}
\renewcommand{\le}{\leqslant}
\renewcommand{\ge}{\geqslant}
\author[B. Kolesnik]{Brett Kolesnik}
\address{University of Warwick, Department of Statistics}
\email{brett.kolesnik@warwick.ac.uk}
\author[G. Zakharov]{Georgii Zakharov}
\address{University of Oxford, Department of Mathematics
and Exeter College}
\email{georgii.zakharov@exeter.ox.ac.uk}
\author[M. Zhukovskii]{Maksim Zhukovskii}
\address{University of Sheffield, School of Computer Science}
\email{m.zhukovskii@sheffield.ac.uk}
\keywords{bootstrap percolation; Catalan numbers; 
critical threshold; 
graph bootstrap percolation; 
long-range dependency; 
oriented percolation;
polluted bootstrap percolation;
weak saturation}
\subjclass[2010]{60K35;	%Interacting random processes; statistical mechanics type models; percolation theory
82B43}				%Percolation
\begin{document}

\title[Threshold for triangulations 
inside convex polygons]
{On the threshold for triangulations 
inside convex polygons}

%%%%%%%%%%%%%%%%%%%%%%%%%%%%%%%%%%%%%%
%%%%%%%%%%%%%%%%%%%%%%%%%%%%%%%%%%%%%%
%%%%%%%%%%%%%%%%%%%%%%%%%%%%%%%%%%%%%%
%%%%%%%%%%%%%%%%%%%%%%%%%%%%%%%%%%%%%%
%%%%%%%%%%%%%%%%%%%%%%%%%%%%%%%%%%%%%%
%%%%%%%%%%%%%%%%%%%%%%%%%%%%%%%%%%%%%%
\begin{abstract} 
Start with a large convex polygon and 
add all other edges inside 
independently with probability $p$. 
At what critical threshold $p_c$ do triangulations
of the polygon begin to 
appear?

The first author and Gravner asked this question, 
and observed 
that $p_c=\Theta(1)$, using 
the relationship with the Catalan numbers  
and a coupling with 
oriented site percolation on $\Z^2$. 
More recently, Archer, Hartarsky, the first author, 
Olesker-Taylor, Schapira and Valesin
proved that $1/4<p_c<p_c^o$, where
$1/4$ is the Catalan exponential growth rate 
and $p_c^o$ is the critical threshold for 
oriented percolation. 
The upper bound is strict, but non-quantitative,
and follows by a renormalization argument. 

We show that $p_c<1/2$ 
using  
a simple ear clipping algorithm, which can be analyzed  
using the gambler's ruin problem. 
This bound is closer to the truth (perhaps near $0.4$)
and shows that most configurations
of edges inside large convex polygons contain triangulations. 
\end{abstract}

\maketitle

%%%%%%%%%%%%%%%%%%%%%%%%%%%%%%%%%%%%%%
%%%%%%%%%%%%%%%%%%%%%%%%%%%%%%%%%%%%%%
%%%%%%%%%%%%%%%%%%%%%%%%%%%%%%%%%%%%%%
%%%%%%%%%%%%%%%%%%%%%%%%%%%%%%%%%%%%%%
%%%%%%%%%%%%%%%%%%%%%%%%%%%%%%%%%%%%%%
\section{Introduction}\label{S_intro}

Let $P_n$ be a convex polygon
with vertices labeled by $\{1,\ldots,n\}$.
We further include a set $E_{n,p}$ of random edges, obtained
by adding all other edges inside $P_n$
independently with probability $p$. 
We are interested in the critical point $p_c$ 
at which triangulations
of $P_n$ appear. More formally, 
if $\mathcal{T}_{n,p}$ is the event that 
$P_n$ can be 
triangulated using the edges of $E_{n,p}$, then  
\[
p_c
=\inf\{p:\liminf_{n\to\infty}\P(\mathcal{T}_{n,p})>0\}
\]
is the critical threshold at which 
triangulations 
begin to emerge. 

Equivalently, we can arrange $n$ distinct points along a circle 
with the curves between adjacent pairs becoming edges,
and then add an Erd{\H o}s--R\'enyi graph ${\mathcal G}_{n,p}$ on top of this. 
From this perspective, $p_c$ is simply the point at which 
${\mathcal G}_{n,p}$ triangulates 
the points along the circle. 

In this work, we show the following.

\begin{thm}\label{T_mainUB}
There exists $\varepsilon>0$ such that 
 $p_c<1/2-\varepsilon$. 
\end{thm}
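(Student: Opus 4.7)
The plan is to build a triangulation of $P_n$ by a stack-based sequential ear-clipping sweep, model the evolution of the stack size as a one-dimensional random walk, and apply a gambler's-ruin estimate.

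Concretely, label the vertices $1, \ldots, n$ cyclically and maintain a stack $S$ of ``frontier'' vertices, initialised to $(1, 2)$. For each $i = 3, 4, \ldots, n$, push $i$ onto $S$ and iteratively attempt the ear-clip: while the top three entries of $S$ are some $a, b, i$ and the chord $\{a, i\}$ lies in $E_{n,p}$ (with the closing boundary edge $\{1, n\}$ counted as always present at $i = n$), pop $b$ and commit the triangle $\{a, b, i\}$ to the triangulation. The algorithm succeeds iff $|S| = 2$ at termination. Every chord examined in this procedure involves the freshly pushed vertex $i$, so the chord-presence indicators are mutually independent $\mathrm{Bernoulli}(p)$ variables; in particular the number of pops at step $i$ is (up to truncation by the current stack size) a $\mathrm{Geom}(1-p)$ random variable with mean $p/(1-p)$.

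The process $(|S|)_i$ is therefore a lazy random walk on $\{2, 3, \ldots\}$ with reflecting barrier at $|S| = 2$ and per-step drift $(1 - 2p)/(1-p)$. A standard gambler's-ruin bound shows that whenever the drift is strictly negative -- i.e.\ $p > 1/2$ -- the walk is positive-recurrent with stationary distribution concentrated near the barrier, so $\P(|S_n| = 2) = \Omega(1)$; this already gives $p_c \le 1/2$. To push the threshold strictly below $1/2$, I would enrich the per-step clip with a bit of look-ahead: when the primary chord test $\{a,i\}$ fails, probe auxiliary chords involving the current top of the stack and the next one or two unprocessed vertices, chosen so that they are fresh (not examined earlier) and will not be re-examined by the standard algorithm. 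Well-chosen auxiliary tests enable a ``super-clip'' that consumes additional boundary vertices in a single step, which boosts the effective drift strictly below $(1-2p)/(1-p)$ and pushes the zero-drift threshold strictly below $1/2$.

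The main obstacle is making this modification rigorous. One must specify the look-ahead so that every chord is tested at most once across the whole algorithm (otherwise the driving variables are no longer independent and the gambler's-ruin estimate fails), and one must verify that the boundary effects of the walk -- the reflection at $|S| = 2$, the truncation of the geometric jumps at small stack sizes, and the deterministic ``free pop'' at the last step supplied by the boundary edge $\{1, n\}$ -- only help the algorithm. A clean approach is to couple the actual stack-size process with a genuinely biased lazy random walk of matching drift, upon which the classical gambler's-ruin bound applies directly, yielding $\liminf_n \P(|S_n| = 2) > 0$ for some $p < 1/2 - \eps$ and thus $p_c < 1/2 - \eps$.
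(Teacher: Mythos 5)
Your stack-based sweep is essentially the paper's greedy ear clipping recast as a single pass, and the first half of your argument is sound: every tested chord involves the freshly pushed vertex, so the driving indicators are i.i.d.\ Bernoulli$(p)$, the stack size is a reflected walk with negative drift for $p>1/2$, and positive recurrence gives $\liminf_n\P(\mathcal{T}_{n,p})>0$, hence $p_c\le 1/2$. (The paper reaches the same bound by iterating GECA with a logarithmic buffer and a root vertex, which additionally yields a with-high-probability, linear-time statement; your endgame is enough for the $\liminf$ definition of $p_c$.)

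The gap is in the only part the theorem actually asserts: the strict inequality. You say that ``well-chosen auxiliary tests enable a super-clip that boosts the effective drift'' and ``pushes the zero-drift threshold strictly below $1/2$'', but you neither specify the look-ahead moves nor verify the two facts on which everything hinges: (i) that the enlarged move set can still be driven by edges each revealed at most once, with the revelations of consecutive steps disjoint, so that the stack size remains a homogeneous Markov chain with bounded jumps to which the generalized gambler's-ruin bound applies; and (ii) that the resulting drift is in fact negative for some $p<1/2$. Point (ii) is not automatic: the look-ahead acts only on the probability-$q$ branch where the primary clip fails, and failed auxiliary probes reveal non-edges whose cost must be accounted for, so whether the net drift crosses zero strictly below $1/2$ is a genuine computation, not a soft argument. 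This is precisely the content of the paper's BECA: it works with lists of length $4$, prescribes seven explicit, mutually exclusive moves with a need-to-know revelation order guaranteeing the Markov property, and computes the drift $\Delta=q-2p^2-p^2q+2pq^4+2p^2q^4=2p^6-6p^5+4p^4+5p^3-9p^2+p+1$, which is negative exactly for $p>p_*\approx 0.4916$; only then do the sequential-sampling bounds of Feller give $p_c\le p_*<1/2$. As written, your proposal proves $p_c\le 1/2$ but only gestures at the strict improvement, so it does not establish the theorem.
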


We actually show that $p_c\le p_*$, where $p_*\approx 0.4916$. 
Theorem~\ref{T_mainUB} implies that
most configurations of edges
inside a large convex polygon
can be used to triangulate the polygon.

%%%%%%%%%%%%%%%%%%%%%%%%%%%%%%%%%%%%%%
%%%%%%%%%%%%%%%%%%%%%%%%%%%%%%%%%%%%%%
%%%%%%%%%%%%%%%%%%%%%%%%%%%%%%%%%%%%%%
%%%%%%%%%%%%%%%%%%%%%%%%%%%%%%%%%%%%%%
%%%%%%%%%%%%%%%%%%%%%%%%%%%%%%%%%%%%%%
\subsection{Previous results}

Recent work by  
Archer, Hartarsky, the first author, Olesker-Taylor, 
Schapira and Valesin \cite{AHKOTSV25} 
shows that  
$p_c<p_c^o$,  where $p_c^o$ is the critical threshold
for oriented site percolation on the integer lattice $\Z^2$; see, e.g.,
Durrett \cite{Dur84}. 
Numerical simulations indicate that $p_c^o\approx .7055$; see
Essam, Guttmann and De'Bell
\cite{EGDeB88}. On the other hand, the numerics
in \cite{AHKOTSV25} suggest that $p_c$ is, in fact, much smaller,  
perhaps somewhere between 0.39 and 0.41; see \cite[Fig.\ 3]{AHKOTSV25}.

The connection with oriented
percolation was already observed by 
Gravner and the first author \cite{GK23} 
(see Theorem 1.3, Section 3 and Conjecture 6.1), 
where the problem of finding $p_c$ was referred to as 
{\it Catalan percolation},
as a special case of the transitive closure
dynamics in polluted environments studied therein (see Section \ref{S_equiv} below). 
The motivation in \cite{GK23} was to 
bring together ideas from {\it weak saturation} (see, e.g., 
Bollob\'{a}s \cite{Bol68}
and 
Balogh, Bollob\'{a}s and Morris \cite{BBM12})
and {\it polluted bootstrap percolation} 
(see, e.g., Gravner and McDonald \cite{GMcD97}),
in response to the final paragraph in 
\cite[p.\ 439]{BBM12}.

Let us remark that, 
although Theorem \ref{T_mainUB}  
improves on the upper bound in \cite{AHKOTSV25},
it is, in fact, the {\it proof}
rather than the {\it result}
that is the main contribution in \cite{AHKOTSV25}. 
Indeed,   
the coupling, first observed in \cite{GK23},  
with oriented percolation  
is based on a significant
restriction of the full Catalan dynamics 
(see Section \ref{S_equiv} below). 
As such, $p_c<p_c^o$ is certainly not unexpected. 
However, due to long-range, non-decaying correlations
in the model, it is not straightforward to
deduce a {\it strict} inequality using standard
techniques 
from percolation
(e.g., the method of 
{\it essential enhancements} 
of Aizenman and Grimmett \cite{AG91} does not apply); 
see \cite[\S1.2]{AHKOTSV25}
for a detailed discussion. 
As such, ideas in the proof in \cite{AHKOTSV25}
may be useful in analyzing other oriented percolation models, 
beyond the specific case of Catalan percolation. 
Indeed, the dynamics studied in \cite{AHKOTSV25} 
can be thought of as a 
directed version of 
{\it Brochette percolation}, as 
studied by Duminil-Copin, Hil\'{a}rio, Kozma and Sidoravicius
\cite{DCHKS18}.

%%%%%%%%%%%%%%%%%%%%%%%%%%%%%%%%%%%%%%
%%%%%%%%%%%%%%%%%%%%%%%%%%%%%%%%%%%%%%
%%%%%%%%%%%%%%%%%%%%%%%%%%%%%%%%%%%%%%
%%%%%%%%%%%%%%%%%%%%%%%%%%%%%%%%%%%%%%
%%%%%%%%%%%%%%%%%%%%%%%%%%%%%%%%%%%%%%
\subsection{Equivalence with Catalan percolation}
\label{S_equiv}

The perspective taken 
in \cite{GK23,AHKOTSV25}
is conceptually different than ours, 
but is formally equivalent,
as we will now explain. 

In \cite{GK23} the authors considered
the following situation: start by initially
{\it infecting} all directed nearest-neighbor
edges along the integer line path from 
$1$ to $n$. Then {\it open} all other leftward
(resp.\ rightward) directed edges $i\leftarrow j$ 
(resp.\ $i\rightarrow j$), for $1\le i<j\le n$ and $j-i>1$, 
with some probability 
$p_\ell$ (resp.\ $p_r$). 
All other directed edges are {\it polluted} 
and can never become infected. 
Open edges, on the other hand, can become infected 
by the following transitive closure dynamics: 
if at some point there are 
directed edges $i\to j\to k$
which 
are both infected (initially or otherwise) 
then $i\to k$ becomes infected if it is open. 
This model is introduced in 
\cite{GK23} as a simple model 
for the spread of information 
in the presence of censorship. 
(In fact, other initial graphs, 
besides paths, are considered in 
\cite{GK23}.)

What is called {\it Catalan percolation}
in \cite{GK23} is the special case 
in which $p_\ell=0$ and $p_r>0$. 
(When both $p_\ell,p_r>0$ the behavior is very different,
and still not fully understood.) 
In this case the dynamics have a simple
graphical description, using undirected edges, 
in which open edges $\{i,k\}$ become infected
if there are two infected edges $\{i,j\}$ and $\{j,k\}$
``underneath'' the edge $\{i,k\}$. 
In \cite{AHKOTSV25}, the authors
let $\varphi_n(p)$ be the probability that
the edge 
$\{1,n\}$ joining the endpoints of the path 
from $1$ to $n$ 
is eventually infected, conditional on it being open,
and put $p_c=\inf\{p:\liminf_{n\to\infty}\varphi_n(p)>0\}$.
However, the eventual infection of $\{1,n\}$
by these dynamics, assuming 
that it is open, is equivalent to the existence of 
a triangulation of $P_n$ using the edges in $E_{n,p}$;
see Figure \ref{F_cat} below. 

\begin{figure}[h!]
\centering
\includegraphics[scale=1]{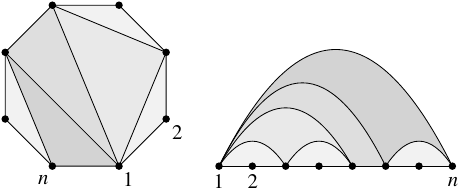}
\caption{Comparing Catalan percolation 
to the existence of a triangulation.}
\label{F_cat}
\end{figure}

The Catalan numbers are, of course,
related to many combinatorial objections. 
In \cite{GK23,AHKOTSV25} the 
connection is, in some sense, 
viewed in terms of parenthesizations of a product. 
In the current work, we work with triangulations instead. 

The coupling with oriented percolation, used in 
\cite{GK23,AHKOTSV25}, 
arises from a simple restriction of the Catalan percolation 
dynamics, where $\{i,k\}$ is only infected
if also at least one of the infected
edges $\{i,j\}$ or $\{j,k\}$ underneath it is a nearest-neighbor
edge.

%%%%%%%%%%%%%%%%%%%%%%%%%%%%%%%%%%%%%%
%%%%%%%%%%%%%%%%%%%%%%%%%%%%%%%%%%%%%%
%%%%%%%%%%%%%%%%%%%%%%%%%%%%%%%%%%%%%%
%%%%%%%%%%%%%%%%%%%%%%%%%%%%%%%%%%%%%%
%%%%%%%%%%%%%%%%%%%%%%%%%%%%%%%%%%%%%%
\subsection{Our strategy}

In this work, we recast Catalan percolation
in a new light, in terms of triangulations, 
which leads to better bounds and simpler proofs, using  
a natural connection with 
random walks. 

The proof of Theorem \ref{T_mainUB} 
gives an upper bound of about $0.4916$, 
and elaborations on our methods 
would yield further 
improvements. 
In fact, our method of proof is rather straightforward
and elementary. We simply note that 
many natural ear clipping algorithms, 
using the random edges $E_{n,p}$,  
can be analyzed by the
gambler's ruin problem, and variations 
thereof, as discussed, e.g., in 
Feller 
\cite[XIV.\ 8]{Fel68} (see Section \ref{S_Feller}
below). 
We recall that an {\it ear} in a triangulation
of a polygon 
is a triangle with two adjacent edges 
along the boundary of the polygon.
Removing such a triangle leaves us with a 
smaller convex polygon
to triangulate. 

More specifically, we will first
show (see Sections \ref{S_GECA} and \ref{S_GTA} 
below) that the simplest greedy
ear clipping algorithm quite naturally
leads to a direct comparison
with the classical gambler's ruin problem. 
This gives a very short proof that 
$p_c\le 1/2$, 
along with a linear time algorithm of finding a triangulation
when it exists. 
Then (in Section \ref{S_BECA} below), 
to establish a strict inequality 
$p_c<1/2$, we relax the greedy dynamics, 
and allow  additional flexibility
at the local decision level. 
Since this leads, once again, to a Markov chain with negative
drift, a similar proof applies (using the general bounds 
recalled in Section \ref{S_Feller} below). 
Expanding this local neighborhood further would 
lead to further improvements, 
but, as it would seem, at the expense of increasingly 
more elaborate arguments.

%%%%%%%%%%%%%%%%%%%%%%%%%%%%%%%%%%%%%%
%%%%%%%%%%%%%%%%%%%%%%%%%%%%%%%%%%%%%%
%%%%%%%%%%%%%%%%%%%%%%%%%%%%%%%%%%%%%%
%%%%%%%%%%%%%%%%%%%%%%%%%%%%%%%%%%%%%%
%%%%%%%%%%%%%%%%%%%%%%%%%%%%%%%%%%%%%%
\subsection{Ear clipping comparison}
Finally, to further compare
with the previous works \cite{GK23,AHKOTSV25} 
discussed above, 
let us note that the 
proof of the 
oriented percolation bound can also be viewed 
in terms of a certain ear clipping strategy. 
This strategy, however, is much more restricted, 
requiring that each ear being clipped contains at 
least one edge in the original polygon $P_n$. 

Basically, in the coupling with oriented percolation, 
the orientation of a step corresponds to either clipping directly  
to the left or right of the currently clipped region;
see, e.g., 
\cite[Fig.\ 3]{GK23} and 
\cite[Fig.\ 6]{AHKOTSV25}. 
The strict inequality
in \cite{AHKOTSV25}
essentially comes from studying 
a process that can 
clip slightly deeper into the polygon, by also clipping ears of $P_n$ 
if they are adjacent to the currently clipped region; 
see \cite[Fig.\ 8]{AHKOTSV25}.

In this work, on the other hand, we consider exploration 
processes that can clip well into the polygon, 
and this opens up a lot more combinatorial freedom;
see, e.g., Figure \ref{F_GTA} below. 
Fortunately, although 
these processes are less restricted, there
is a comparison with 
reflected biased random walks to be made, 
leading to relatively simple proofs.

%%%%%%%%%%%%%%%%%%%%%%%%%%%%%%%%%%%%%%
%%%%%%%%%%%%%%%%%%%%%%%%%%%%%%%%%%%%%%
%%%%%%%%%%%%%%%%%%%%%%%%%%%%%%%%%%%%%%
%%%%%%%%%%%%%%%%%%%%%%%%%%%%%%%%%%%%%%
%%%%%%%%%%%%%%%%%%%%%%%%%%%%%%%%%%%%%%
\subsection{Related works}

In closing, let us mention
a few other works in the literature
with some connection to ours. 
First, we recall that 
Bollob\'{a}s and Frieze \cite{BF91}
found the thresholds for spanning maximal planar/outerplanar
subgraphs of the random graph 
${\mathcal G}_{n,p}$. In particular, their result implies the 
threshold probability, up to a polylogarithmic factor, 
for the appearance of a triangulation of a triangle --- i.e., a triangulation with the maximum possible number of ``internal'' vertices ---
 and a triangulation of an $n$-gon --- a somewhat opposite 
 case when there are no internal vertices in a triangulation. 
 A precise order of magnitude for the former threshold 
 follows from a later result of Riordan~\cite{Riordan}. 
 The threshold for the latter case follows, up to a constant factor, 
 from the recent results on the square of a Hamiltonian cycle by 
Kahn, Narayanan and Park \cite{KNP21}
and the third author \cite{Zhu25}.

%%%%%%%%%%%%%%%%%%%%%%%%%%%%%%%%%%%%%%
%%%%%%%%%%%%%%%%%%%%%%%%%%%%%%%%%%%%%%
%%%%%%%%%%%%%%%%%%%%%%%%%%%%%%%%%%%%%%
%%%%%%%%%%%%%%%%%%%%%%%%%%%%%%%%%%%%%%
%%%%%%%%%%%%%%%%%%%%%%%%%%%%%%%%%%%%%%
\section{Gambler's ruin}
\label{S_Feller}

We recall a result   
on the generalized gambler's ruin problem for
random walks $(X_t)$ on $\Z$
with bounded jumps; see, e.g., 
Feller \cite[XIV.\ 8]{Fel68} on what is called
{\it sequential sampling} therein. 

Consider a random walk (i.e., a time- and space-homogeneous Markov chain) 
$(X_t)_{t\in\mathbb{Z}_{\geq 0}}$ on $\Z$ 
starting at some $X_0=x$ with $0<x<J$, 
where $J$ is some fixed integer that we call the {\it jackpot}. 
We let  
\[
p(k)=\P(X_{t+1}-X_t=k) 
\]
denote its transition probabilities. 
 We also suppose that, for some integers $\nu,\mu>0$, 
 we have that 
\begin{itemize}
\item $p(-\nu)>0$;
\item $p(\mu)>0$; and 
\item $p(k)=0$
for $k<-\nu$ or $k>\mu$. 
\end{itemize}
In other words $[-\nu,\mu]$ is the smallest interval containing
the support of $p$. 
Furthermore, we suppose that $(X_t)$ has  
a negative drift
\[
 \Delta=\sum_k kp(k)<0.
\] 

In Feller \cite[XIV.\ 8, (8.12)]{Fel68}, bounds 
are given for the probability 
\[
\phi(x,J)
=
\P(\inf\{t:X_t\ge J\}<\inf\{t:X_t\le 0\})
\] 
that $(X_n)$ reaches a jackpot value $\ge J$
before a {\it ruin} value $\le 0$. 
Specifically,  
\begin{equation}\label{E_Feller}
\frac{\alpha_*^x-1}{\alpha_*^{J+\mu-1}-1}
\le \phi(x,J)
\le 
\frac{\alpha_*^{x+\nu -1}-1}{\alpha_*^{J+\nu -1}-1},
\end{equation}
where $\alpha_*>1$ is the unique $\alpha\neq1$ satisfying 
the characteristic equation 
\[
\sum_k \alpha^k p(k)=1.
\]
In our applications of \eqref{E_Feller}, we will have 
$x$ fixed and 
$J=\delta\log n$, in which case, 
as $n\to\infty$, 
\begin{equation}\label{E_UBell}
\phi(x,J)=O(n^{-\delta\log\alpha_*}). 
\end{equation}

Finally, let us note that, if $\nu=\mu=1$, 
then \eqref{E_Feller} reduces to the classical 
gambler's ruin formula
\begin{equation}\label{E_GR}
\phi(x,J)
=
\frac{(p/q)^x-1}{(p/q)^J-1},
\end{equation}
with $p:=p(-1)>p(1)=:q$.

%%%%%%%%%%%%%%%%%%%%%%%%%%%%%%%%%%%%%%
%%%%%%%%%%%%%%%%%%%%%%%%%%%%%%%%%%%%%%
%%%%%%%%%%%%%%%%%%%%%%%%%%%%%%%%%%%%%%
%%%%%%%%%%%%%%%%%%%%%%%%%%%%%%%%%%%%%%
%%%%%%%%%%%%%%%%%%%%%%%%%%%%%%%%%%%%%%
\section{Greedy ear clipping}
\label{S_GECA}

In this section, we describe a natural {\it greedy 
ear clipping algorithm (GECA)}, which 
will play an important role in our proofs. 

{\bf Input.} As input, GECA takes in: 
\begin{itemize}
\item a polygon $P$; 
\item with vertices labelled by $v_1,\ldots,v_n$ 
in counter-clockwise order
around the boundary of $P$; and 
\item some set of edges $E$ inside $P$.
\end{itemize}

{\bf Output.} If successful, GECA returns:
\begin{itemize}
\item a polygon $P'$; 
\item with vertices $v_1,v_\tau,\ldots,v_n$, 
for some $3\le \tau \le n$; and 
\item a triangulation of the region inside $P$
{\it to the right} of $\{v_1,v_\tau\}\in E$ 
(i.e., the region is bounded by a polygon 
with vertices $v_1,v_2,\ldots,v_{\tau}$).
\end{itemize}

{\bf Algorithm.} In the $k$th step of GECA we have
a polygon $P^{(k)}$ and a list $\ell_k$ of vertices 
along a counter-clockwise path along 
the boundary of the polygon starting from $v_1$. 
We start with 
$P^{(0)}=P$
and $\ell_0=(v_1,v_2,v_3)$. 
Suppose that after the $k$th step of the algorithm, 
we have 
a polygon $P^{(k)}$ and a 
list 
\[
\ell_k=(v_1=v_1^{(k)},\ldots,v_{m_k}^{(k)}).
\]
Then, in the $(k+1)$th step of GECA, we proceed
as follows: 
\begin{itemize}
\item {\bf Clipping step:} If $\{v_{m_k-2}^{(k)},v_{m_k}^{(k)}\}\in E$, 
then we obtain $P^{(k+1)}$
from $P^{(k)}$ by clipping the ear   
induced by $\{v_{m_k-2}^{(k)},v_{m_k-1}^{(k)},v_{m_k}^{(k)}\}$. 
In this case, we put 
\[
\ell_{k+1}=(v_1^{(k)},\ldots,v_{m_k-2}^{(k)},v_{m_k}^{(k)}). 
\]
\item {\bf Extending step:} Otherwise, if 
$\{v_{m_k-2}^{(k)},v_{m_k}^{(k)}\}\notin E$, let  
$P^{(k+1)}=P^{(k)}$
and 
\[
\ell_{k+1}=(v_1^{(k)},\ldots,v_{m_k}^{(k)},v), 
\]
where $v$ is the next vertex 
after $v_{m_k}^{(k)}$
in counter-clockwise order
along the boundary of $P$. 
\end{itemize}

See Figure \ref{F_GECA}
for an illustration. 

\begin{figure}[h!]
\centering
\includegraphics[scale=1]{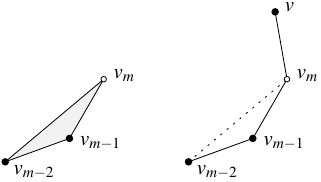}
\caption{In GECA,
we clip an ear if possible, and otherwise 
move to the next vertex along the boundary of $P$.}
\label{F_GECA}
\end{figure}

{\bf Termination.} 
GECA continues in this way, until eventually 
in some step $k$ 
either:
\begin{itemize}
\item {\bf Success:} $\ell_k=(v_1,v_\tau)$, 
for some $3\le \tau\le n$, in which case 
(by induction) GECA has triangulated the region 
inside $P$ to the right of
$\{v_1,v_\tau\}\in E$, and we terminate 
the algorithm and return 
$P'=P_k$; or else 
\item {\bf Failure:} the last vertex in the list 
$\ell_k$ is $v_{m_k}^{(k)}=n$
and 
$\{v_{m_k-2}^{(k)},v_{m_k}^{(k)}\}\notin E$, 
in which case we say that GECA has failed
and terminate the algorithm. 
\end{itemize}

%%%%%%%%%%%%%%%%%%%%%%%%%%%%%%%%%%%%%%
%%%%%%%%%%%%%%%%%%%%%%%%%%%%%%%%%%%%%%
%%%%%%%%%%%%%%%%%%%%%%%%%%%%%%%%%%%%%%
%%%%%%%%%%%%%%%%%%%%%%%%%%%%%%%%%%%%%%
%%%%%%%%%%%%%%%%%%%%%%%%%%%%%%%%%%%%%%
\section{Greedy triangulations}
\label{S_GTA}

We show that
if $p>1/2$ then 
we can 
find a triangulation of $P_n$
using a certain {\it greedy triangulation algorithm (GTA)}
based on GECA.  

\begin{prop}
\label{P_p12}
For every constant $p>1/2$, with high probability, 
we can find a triangulation of $P_n$ 
in linear time. 
In particular,  $p_c \le 1/2$. 
\end{prop}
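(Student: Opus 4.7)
The plan is to define GTA as the iterative application of GECA on shrinking sub-polygons, and to analyze its success probability via a direct comparison with a biased reflected random walk.

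Within any single run of GECA, the chord $\{v_{m_k-2}^{(k)}, v_{m_k}^{(k)}\}$ inspected at step $k+1$ lies strictly inside the current active region, which shrinks monotonically as clips occur, so no chord is queried twice. Hence the query outcomes form iid $\mathrm{Bernoulli}(p)$ trials, and the list length $m_k$ performs a nearest-neighbor random walk on $\{2,3,\ldots\}$ starting at $m_0=3$, with $\P(m_{k+1}-m_k=-1)=p$ (clip) and $\P(m_{k+1}-m_k=+1)=q:=1-p$ (extend). Since $p>1/2$, the walk has negative drift $\Delta=q-p<0$. I then define GTA: apply GECA to $P^{(0)}=P_n$, and after each successful run yielding chord $\{v_1,v_\tau\}$, recurse on the sub-polygon $P^{(c+1)}=(v_1,v_\tau,v_{\tau+1},\ldots,v_n)$; halt once $|P^{(c)}|\le 3$, since a size-$3$ sub-polygon is a triangle whose boundary edges are either original boundary edges of $P_n$ or chords already certified present in earlier iterations, and is therefore already triangulated. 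Across iterations the chord $\{v_1,v_{\tau_c}\}$ closing off iteration $c$ is never re-queried (the list's last vertex only advances along $P_n$'s boundary), so distinct iterations query pairwise disjoint chord sets and their walks are independent. Since the expected first-passage time from $m=3$ to $m=2$ is the constant $1/(2p-1)$, each iteration decreases the sub-polygon size by $\Theta(1)$ in expectation, yielding $O(n)$ iterations and linear total runtime.

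For the success probability, view GTA as a single long reflected biased walk on $\{2,3,\ldots\}$: whenever $m$ hits $2$ while the list has not yet reached $v_n$, the forced transition $2\to 3$ begins the next GECA iteration and advances by one the ``frontier'' $j$, i.e.\ the index along $P_n$'s boundary of the last vertex in the list. Let $M^*$ denote $m$ immediately after $j$ first equals $n$. Conditionally on $M^*$, GTA then succeeds iff the next $M^*-2$ coin flips are all clips, with probability $p^{M^*-2}$. Solving the balance equations for $(m_k)$ and performing a Palm/stationary analysis at the up-jump events yields, as $n\to\infty$,
\[
\P(\mathrm{GTA\ succeeds}) = \E\bigl[p^{M^*-2}\bigr] \;\longrightarrow\; \frac{2p-1}{p} \;>\; 0,
\]
uniformly in $n$, for every $p>1/2$. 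This shows $\liminf_n \P(\mathcal{T}_{n,p})>0$, which immediately gives $p_c\le 1/2$.

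The main obstacle is upgrading this positive-probability estimate to the full ``with high probability'' claim of the proposition. I would resolve this by invoking a sharp-threshold theorem for monotone increasing events in Bernoulli product spaces (e.g.\ Friedgut--Kalai or Bourgain): for any fixed $p>1/2$ pick $p_0\in(1/2,p)$; the above gives $\liminf_n \P(\mathcal{T}_{n,p_0})>0$, and sharpness of the threshold (width $o(1)$) then forces $\P(\mathcal{T}_{n,p})\to 1$, completing the proof along with the linear-runtime assertion.
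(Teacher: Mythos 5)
Your first half is sound and matches the paper: within a run of GECA each queried chord is fresh, so the list length is a $\pm1$ walk with clip probability $p$ and extend probability $q=1-p$, and the gambler's ruin formula \eqref{E_GR} controls it. The genuine gap is in your endgame. Your version of GTA marches the frontier all the way to $v_n$ and then must clip the residual list down, which (as you yourself compute) succeeds only with a probability bounded away from $1$ (your $\E[p^{M^*-2}]\to(2p-1)/p$); this proves $\liminf_n\P(\mathcal{T}_{n,p})>0$, hence $p_c\le 1/2$ under the paper's definition of $p_c$, but not the ``with high probability'' statement, and your proposed rescue via sharp-threshold theorems does not work as stated. Friedgut--Kalai requires a group acting transitively on the coordinates; here the coordinates are the chords of $P_n$, and the dihedral symmetry of the polygon preserves the cyclic length of a chord, so it is very far from transitive. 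Bourgain's non-symmetric criterion is not off-the-shelf either: to conclude a sharp threshold you must rule out constant-size ``boosters,'' a nontrivial, problem-specific verification you do not attempt (if sharpness of the threshold for this model followed softly, the liminf-based definition of $p_c$ used here and in the cited works would be moot). Finally, even granting existence of a triangulation whp by such an argument, it is non-constructive: your GTA still fails with probability about $q/p$, so the proposition's claim that we can \emph{find} a triangulation in linear time whp would not follow.

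The paper closes exactly this gap with a different endgame device, which is the idea you are missing: reserve a buffer $B$ of the last $\beta\log n$ vertices. Since \eqref{E_GR}--\eqref{E_UBell} show every GECA list stays below $\delta\log n$ whp, there are $\Omega(n/\log n)$ iterations, and at the end of each one the closing vertex is joined to \emph{all} of $B$ with probability $p^{b+1}=n^{\beta\log p}$; choosing $\beta$ small, whp some iteration produces such a ``root'' $\rho$ in the first half of the polygon. One then reruns GECA from $\rho$ until an iteration closes with an edge $\{\rho,u\}$, $u\in B$ (whp the last $O(\log n)$ lists never exceed length $b/2$, so the process cannot overshoot past $n$), and finishes with the fan of edges from $\rho$ into $B$ (Figure~\ref{F_GTA}). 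This converts your constant-probability endgame into a whp one while keeping the algorithm explicit and linear time, with no appeal to threshold-sharpness machinery.
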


The proof is almost as simple as iterating
the GECA, and applying the classical 
gamber's ruin formula \eqref{E_GR}. However, 
as we will see, some care is required
as we near the end our of
tour of the boundary
of $P_n$. 

To overcome this issue, we identify a set 
$B=\{n-b,\ldots,n\}$
of vertices, which we will call the {\it buffer}. 
Here $b= \beta\log n$ (ignoring insignificant rounding issues, 
here
and throughout this work) where
$\beta>0$ is a small positive constant, to be determined below. 
Then, 
roughly speaking, to run GTA we will proceed as follows: 

\begin{itemize}
\item {\bf Root finding:} We iterate GECA. 
In the first application of GECA, we start with  
$P^{(0)}=P_n$, $E=E_{n,p}$, and $\ell_0=(1,2,3)$. 
Each subsequent application of GECA is applied 
to the polygon that the previous application of GECA outputs.
Recall that, after the $k$th application, 
we will have triangulated the region inside of $P_n$ 
to the right of some edge $\{1,v_\tau\}\in E_{n,p}$. 
We continue to iterate GECA until the first time that 
 $v_\tau$ is neighbors with all vertices in the buffer $B$. 
We denote this vertex by $\rho$, and call it the 
{\it root}. We let $P_\rho$ denote the polygon delimited by 
$\{1,\rho\}$ and the path in $P_n$ to 
the left of this edge. 
\item {\bf Completion:} Once we have found $\rho$, 
we continue to iterate starting with $P_\rho$ and first vertex $v_1:=\rho$. 
More precisely, 
in the first iteration, we 
begin with $P^{(0)}=P_\rho$, $E=E_{n,p}$ 
(or, more precisely, the edges of $E_{n,p}$ inside $P_\rho$), 
and $\ell_0=(\rho,u,v)$, 
where $u$ and $v$ are the next two vertices after $\rho$ in 
(the counter-clockwise path around the boundary of) $P_n$. 
We continue to iterate until the first time that some
iteration of GECA finishes with some edge
$\{\rho,u_k\}$ with $u_k\in B$ in the buffer. 
At this point, we halt GECA, and complete the triangulation 
of $P_n$ using edges between $\rho$ and $B$. 
\end{itemize}

Of course, in the proof below, we will need to show that,
with high probability, this procedure is well defined. 

See Figure \ref{F_GTA}
for an example. 

\begin{figure}[h!]
\centering
\includegraphics[scale=1]{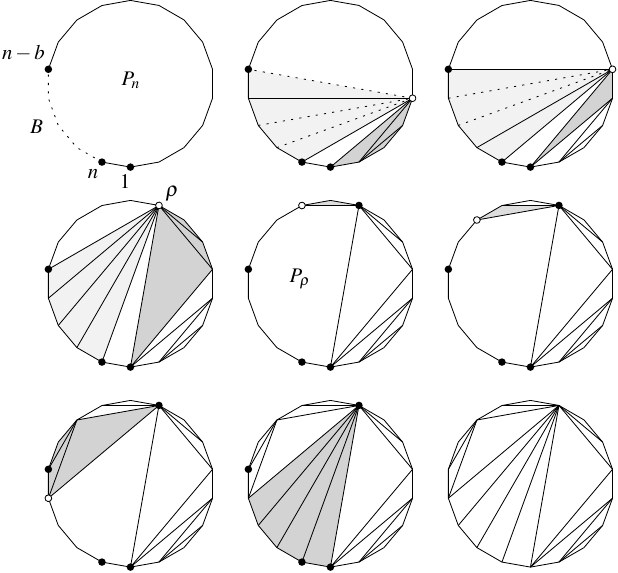}
\caption{Using GTA to triangulate $P_n$:  
Iterations of GECA are shown
in dark grey. The light grey regions depict attempts 
at finding a root. 
After the third such attempt, we find $\rho$. We then
continue to triangulate the remaining polygon $P_\rho$, 
until some iteration of GECA ends in the buffer $B$. 
Finally, we  
complete
the triangulation using 
 edges between $\rho$ and $B$.}
\label{F_GTA}
\end{figure}

\newpage
\begin{proof}[Proof of Proposition \ref{P_p12}]
We will use GTA, as described above. 

The key observation is to note that GECA 
defines a simple random walk, with a negative drift.
Indeed, recall that GECA starts with a list of length 3. 
In clipping steps the list decreases by 1, and in extending steps
it increases by 1. It is easy to see that these steps  
occur with probabilities $p$ and $q=1-p$, respectively, 
as they do not depend on 
edges that affected the outcome of any previous steps. 

As such, the gambler's ruin formula \eqref{E_GR}
applies. In particular, the probability that, in any iteration 
of GECA used while running GTA, some list 
reaches length 
$J=\delta\log n$
is at most
\[
O(n(p/q)^{-J})=O(n^{1-\delta\log(p/q)})\ll1,
\]
for a sufficiently large $\delta>0$. 

Hence, for any such $\delta>0$, there will 
be, with high probability, $\Omega(n/\log n)$ many opportunities
to find a root $\rho$. Note that, at each such 
opportunity, we successfully find $\rho$ 
with probability 
$p^{b+1}$, where recall that $b=\beta \log n$. Note that
\[
\frac{n}{\log n} p^{b+1} = \Omega\left(n^{1+\beta\log p}/\log n\right)\gg 1,
\]
for any small $\beta>0$. 

Therefore, for any large
$\delta>0$ and small $\beta>0$, we will, with high probability, 
find a root $\rho$ somewhere along the first half 
(in counter-clockwise order starting from $1$ of the 
boundary) of $P_n$. 

Assuming that 
$\rho$ has been found and that all lists in all iterations of 
GECA never exceed length $J=\delta \log n$, then there will be some 
iteration of GECA which ends with an edge $\{\rho,u\}$
for some $(n-b)-2J\le u\le (n-b)-J$.
However, a simple union bound over at most $2J+b=O(\log n)$ 
iterations of GECA (using the Markov property of the process) 
further shows that, with high 
probability, no subsequent iterations of GECA will have a list 
whose length ever 
exceeds $b/2$. Therefore, some final iteration of GECA
will terminate with an edge $\{\rho,v\}$ with $v\in B$. 
Finally, we complete the triangulation of $P_n$
using the edges between $\rho$ and the rest of the 
vertices along $P_n$ between $v$ and $n$. 
\end{proof}

%%%%%%%%%%%%%%%%%%%%%%%%%%%%%%%%%%%%%%
%%%%%%%%%%%%%%%%%%%%%%%%%%%%%%%%%%%%%%
%%%%%%%%%%%%%%%%%%%%%%%%%%%%%%%%%%%%%%
%%%%%%%%%%%%%%%%%%%%%%%%%%%%%%%%%%%%%%
%%%%%%%%%%%%%%%%%%%%%%%%%%%%%%%%%%%%%%
\section{Better clipping}
\label{S_BECA}

Finally, we prove our main result 
Theorem \ref{T_mainUB}. 
The proof is similar in spirit to 
that of Proposition \ref{P_p12}.
However, instead of proceeding greedily via GECA, 
we will be more judicial about ear clipping, 
using a certain {\it better ear clipping algorithm (BECA)},
as depicted in Figure \ref{F_BECA} below. 
In contrast to GECA, this algorithm initiates with a list 
$\ell_0$ of length 4. At each iteration, the algorithm starts 
from the clipping step: it reveals some adjacencies 
in a certain order and, 
depending on the revealment, does one of 7 moves, as  
in Figure \ref{F_BECA}. At the end of the clipping step in 
a single iteration of BECA, if the list reduces to length 2 or 3, 
we proceed as in GECA: we apply the extending step, 
appending the next vertices of the polygon so that, 
in the next iteration, the list has length exactly 4.

\begin{figure}[h!]
\centering
\includegraphics[scale=1]{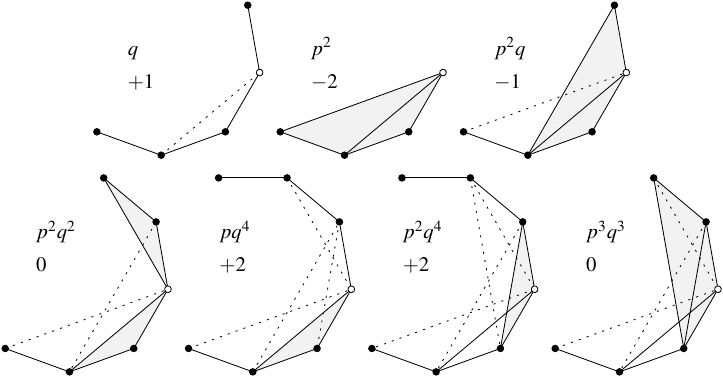}
\caption{In BECA, before clipping an ear, 
we consider the 
benefit going forward. 
In each step, we take one of the above 
mutually exclusive moves.
To determine which move is made, we reveal some edges 
(and non-edges) on a need-to-know basis. 
This can be done by a simple search algorithm, 
as indicated by the order in which the moves are listed above. 
As in Figure \ref{F_GECA} for the GECA, 
the open dot represents the current
end of the list $\ell$. The vertex along the 
path furthest to its right (possibly itself) 
will be at the end of the list in the next step. 
The change in the length of the list
is indicated.}
\label{F_BECA}
\end{figure}

\begin{proof}[Proof of Theorem \ref{T_mainUB}]
In a nutshell, 
we simply apply the proof of Proposition \ref{P_p12}, 
but 
replace the role of GECA with that of BECA, 
and 
apply the general bounds \eqref{E_Feller} 
for $\phi$ rather than the basic formula \eqref{E_GR}. 
Note that, in order to make a 
BECA step we require
the current list to have length at least~4. 
Moreover, one iteration in the BECA algorithm may 
consume up to three vertices to the right of the last vertex $v_i$ of the list. 
Therefore,  $v_i$ should always satisfy $i\le n-3$.  
With high probability this always happens due to the 
logarithmic size of the buffer --- the last application of BECA 
outputs a list whose last vertex is $\Theta(\log n)$-far from $v_n$. 

Let us proceed with the details, in the few places where 
they differ from the proof of 
Proposition \ref{P_p12}. 

First, let us note that, when using BECA,
we still have a Markov chain. 
Indeed, by Figure \ref{F_BECA}, 
we see that by using a simple search algorithm, 
revealing edges in $E_{n,p}$ on a need-to-know
basis, we can determine which move
to make in a BECA step. 
Furthermore, assuming $v:=v_{m_{k-1}}^{(k-1)}$ and 
$u:=v_{m_{k}}^{(k)}$ are the last vertices in the lists 
$\ell_{k-1}$ and $\ell_{k}$ after the $(k-1)$th and $k$th steps of BECA, 
we have that the $k$th step depends only on adjacencies in $E_{n,p}$ 
between the set vertices to the right of $v$ but to the left of $u$ and the 
set vertices to the right of $u$ and including $u$ itself. 
Therefore, all the adjacencies that BECA reveals in these two 
consecutive steps are disjoint, implying the desired Markov property. 

Finally,
note that BECA steps have a negative drift. 
Indeed, from Figure \ref{F_BECA}, we see that 
the expected change $\Delta$ in the length of a list $\ell$
after a BECA step  
(where $q=1-p$) is 
\begin{align*}
\Delta
&=
q
-2p^2
-p^2q
+2pq^4
+2p^2q^4\\
&=
2p^6
-6p^5 
+4p^4
+5p^3 
-9p^2
+p 
+1. 
\end{align*}
We note that $\Delta<0$
for all $p>p_*$, where
$p_*\approx 0.4916$. 

Therefore, for any such constant $p>p_*$, \eqref{E_UBell}
applies with some $\alpha_*>1$, 
and so we can complete the proof along the same lines as
Proposition \ref{P_p12}. 
\end{proof}

%%%%%%%%%%%%%%%%%%%%%%%%%%%%%%%%%%%%%%
%%%%%%%%%%%%%%%%%%%%%%%%%%%%%%%%%%%%%%
%%%%%%%%%%%%%%%%%%%%%%%%%%%%%%%%%%%%%%
%%%%%%%%%%%%%%%%%%%%%%%%%%%%%%%%%%%%%%
%%%%%%%%%%%%%%%%%%%%%%%%%%%%%%%%%%%%%%
\section*{Acknowledgments}
This work began while GZ and MZ were visiting BK at 
the University of Warwick. We thank 
the Centre for Research in Statistical Methodology (CRiSM) 
for funding this visit.

%%%%%%%%%%%%%%%%%%%%%%%%%%%%%%%%%%%%%%%
%%%%%%%%%%%%%%%%%%%%%%%%%%%%%%%%%%%%%%%
%%%%%%%%%%%%%%%%%%%%%%%%%%%%%%%%%%%%%%%
%%%%%%%%%%%%%%%%%%%%%%%%%%%%%%%%%%%%%%%
%%%%%%%%%%%%%%%%%%%%%%%%%%%%%%%%%%%%%%%
%\bibliographystyle{amsplain}
%\bibliography{pcCat.bib}

\makeatletter
\renewcommand\@biblabel[1]{#1.}
\makeatother

\providecommand{\bysame}{\leavevmode\hbox to3em{\hrulefill}\thinspace}
\providecommand{\MR}{\relax\ifhmode\unskip\space\fi MR }
% \MRhref is called by the amsart/book/proc definition of \MR.
\providecommand{\MRhref}[2]{%
  \href{http://www.ams.org/mathscinet-getitem?mr=#1}{#2}
}
\providecommand{\href}[2]{#2}

\end{document}